\documentclass[conference]{IEEEtran}
\IEEEoverridecommandlockouts
\usepackage{cite}
\usepackage{amsmath,amssymb,amsfonts}
\usepackage{algorithmic}
\usepackage{graphicx}
\usepackage{textcomp}
\usepackage{xcolor}
\def\BibTeX{{\rm B\kern-.05em{\sc i\kern-.025em b}\kern-.08em
    T\kern-.1667em\lower.7ex\hbox{E}\kern-.125emX}}
\newtheorem{theo}{Theorem}
\newtheorem{lem}{Lemma}

\newtheorem{de}{Definition}

\begin{document}

\title{Decentralized design of consensus protocols with minimal communication links based on directed spanning tree\\
\thanks{This work is supported by Beijing Natural Science Foundation (4232041) and National Natural Science Foundation (NNSF) of China under Grant 62273014.}
}

\author{
	\IEEEauthorblockN{
		Yangzhou Chen, 
		Lanhao Zhao} 
	College of Artificial Intelligence and Automation, Beijing University of Technology\\
		Engineering Research Center of Digital Community, Ministry of Education
		Beijing, China\\
		E-mail: yzchen@bjut.edu.cn, zhaolanhao@emails.bjut.edu.cn.
} 

\maketitle

\begin{abstract}
This paper proposes a decentralized design approach of consensus protocols of multi-agent systems (MASs) via a directed-spanning-tree(DST)-based linear transformation and the corresponding minimal communication links. First, the consensus problem of multi-agent systems is transformed into the decentralized output stabilization problem by constructing a linear transformation based on a DST of the communication topology, and thus a necessary and sufficient consensus criterion in terms of decentralized fixed mode (DFM) is derived. Next, a new distributed protocol is designed by using only the neighbors’ information on the DST, which is a fully decentralized design approach. Finally, some numerical examples are given to verify the results attained.
\end{abstract}

\begin{IEEEkeywords}
Consensus, multi-agent system, decentralized output feedback stabilization, decentralized fixed mode, minimal communication links
\end{IEEEkeywords}

\section{Introduction}
With the development of information technology, especially network technology, the MAS has been paid more and more attention in recent years. Various cooperative control problems such as consensus \cite{b1}, controllability \cite{b2}, and stabilization \cite{b3} have been studied. Especially, the consensus design methods have been widely used such as \cite{b4,b5,b6} et al.

Despite the fruitful results that have been obtained, two key issues require further attention. One is to find decentralized design approaches for the distributed consensus protocols, and the other is to minimize the amount of communication between agents. For example, communicating with all neighbors at the same time generates a large amount of data and may cause congestion. Two ways are adopted to reduce the amount of communication data: one is to adopt discontinuous communication, and the other is to communicate with as few neighbors as possible. For discontinuous communication, the event-triggering mechanism has been proposed \cite{b8} and several results have been obtained such as\cite{b9}. In comparison, there is less work to reduce the amount of communication data by reducing the number of communication neighbors.

Obviously, the less neighbor information used in the protocol, the less the amount of communication data, and the higher the reliability of communication. As well-known, the existence of a DST in a fixed communication topology is a necessary condition for MAS achieving consensus \cite{b7}. This implies that the communication based on a DST possesses the minimum number of communication links. This fact motivates us to propose a protocol design approach using only DST-based communication.
On the other hand, in existing literature, despite the fact that the structure of consensus protocols is expressed in distributed form, the parameters in the protocols are generally determined by centralized design procedures, or they are limited to specific ranges for obtaining decentralized design procedures. For instance, the same control gain matrices and even the scalars are required for all the agents for ease of adjustment, which greatly reduces the freedom of parameter selection. Therefore, it is of great practical significance to seek novel approaches to consensus protocol design directly in a decentralized way. Different from the traditional approach of converting the consensus problem into a stability problem by taking the state error, the linear transformation method \cite{b10,b11,b12} is proposed to transform the consensus problem of multi-agent systems into the decentralized output stabilization problem. For example, a new consensus condition can be derived in terms of the concept of DFM in the decentralized output stabilization problem, and a fully decentralized design approach can be proposed for finding the different gain matrices. 

This paper studies the consensus problem of multi-agent systems based on the linear transformation method. First, by constructing a DST-based linear transformation, the consensus problem of multi-agent systems is transformed into the decentralized output stabilization problem. Next, using only the neighbor information of the agent on the DST, rather than all the neighboring agents, a new distributed protocol is designed and proved that the system can reach state consensus. Finally, some numerical examples are given to verify the results.

The rest of this paper is organized as follows. Section 2 presents the problem description and transformation based on a DST-based linear transformation. Section 3 provides a fully decentralized protocol design method that uses minimal communication links. Simulations illustrate the validity of the theoretical results in Section 4. Finally, Section 5 summarizes the results attained.
\section{Problem description and transformation}
In this section, we first give a description of the state consensus problem for the MAS  and use a DST-based linear transformation to equivalently transform the consensus problem into a corresponding decentralized output stabilization problem. Based on this, a necessary and sufficient condition for the MAS to reach state consensus is attained by using the concept of the DFM in decentralized output-feedback stabilization theory.

Consider a general linear MAS, where the dynamics of each agent is
\begin{equation}
	\dot{x}_i=A x_i+B u_i, i=1, \cdots, N, x_i \in \mathbb{R}^n, u_i \in \mathbb{R}^m
\end{equation}
Suppose that the communication relationship between agents is described by a weighted directed graph $\mathcal{G}=(V, E, W)$, where $V=\{1,2, \cdots, N\}, E \subseteq\{(j, i): i, j \in V\}$. $(j, i) \in E$ indicates that the agent $j$ can transfer information to the agent $i$, or that the agent $j$ is the neighbor of the agent $i$. We use $N_i$ to represent the collection of all neighbors of agent $i$. $W=\left[w_{i j}\right] \in \mathbb{R}_{+}^{N \times N}$ is the adjacency weight matrix, where $w_{i j} \geq 0$, and $w_{i j}>0$ if and only if $(j, i) \in E$.

Under the above communication topology, the agent $i$ can obtain the following information (either directly by measuring the relative state $x_j-x_i$ through the agent $i$, or by transferring information from agent $j$ to the agent $i$ through the communication network)
\begin{equation}
z_{j i}=w_{i j}\left(x_j-x_i\right), \quad j \in N_i
\end{equation}
or a combination of this information
\begin{equation}
z_i=\sum_{j \in N_i} z_{j i}=\sum_{j \in N_i} w_{i j}\left(x_j-x_i\right)
\end{equation}
Therefore, the problem of distributed state consensus protocol design can be described as follows: for each agent in (1) a linear feedback control protocol is designed based on its available information (2) or (3) as follows
\begin{equation}
	u_i=K_i z_i=K_i \sum_{j \in N_i} z_{j i}, i=1, \cdots, N
\end{equation}
so that the closed-loop system composed of (1) and (4) meets
\begin{equation}
	\begin{gathered}
		\lim _{t \rightarrow \infty}\left\|x_i(t)-x_j(t)\right\|=0 \\
		\forall i, j \in\{1, \cdots, N\}, \forall x_i(0), x_j(0) \in \mathbb{R}^n
	\end{gathered}
\end{equation}
If the gain matrices $K_i \in \mathbb{R}^{m \times n}$ exist, the MAS (1) is said to achieve asymptotically state consensus under the control protocol (4) with the communication topology $\mathcal{G}$.
For any agent $i \in V$ and its neighbor $j \in N_i$, we call $x_{j i}=x_j-x_i$ the edge state of the directed edge $(j, i) \in E$. Thus, the edge state system is represented as
\begin{equation}
	\dot{x}_{j i}=A x_{j i}+B u_j-B u_i, \quad i \in V, j \in N_i
\end{equation}
It is assumed that the communication topology $\mathcal{G}=(V, E, W)$ has a $\operatorname{DST} \mathcal{T}=\left(V, E_{\mathcal{T}}\right)$, and for the convenience of expression, it is assumed that the vertex $N$ is the root node, and each non-root node $i \in\{1,2, \cdots, N-1\}$ has only one parent node $k_i$, that is $e_i:=\left(k_i, i\right) \in E_{\mathcal{T}}$, $i=1, \cdots, N-1$. We call $e_i, i=1, \cdots, N-1$, the fundamental edges corresponding to the DST $\mathcal{T}$. The edge state $y_i:=x_{k_i}-x_i \quad i=1, \cdots, N-1$, corresponding to the fundamental edge $e_i$ are called the fundamental edge state based on the DST, and thus $y=\left[y_1^T, \cdots, y_{N-1}^T\right]^T$ is the fundamental edge state vector.
\begin{lem}
	Any edge state $x_{j i}, i \in V, j \in N_i$ can be linearly represented by the fundamental edge states $y_i=x_{k_i}-x_i,i=1, \cdots, N-1$ on the DST.
\end{lem}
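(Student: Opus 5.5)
The idea is to express each agent's state relative to the root $N$ as a sum of fundamental edge states along the unique tree path from $N$. Any edge state $x_{ji}=x_j-x_i$ is then a difference of two such expressions.

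\textbf{Step 1 (paths in the tree).} Because $\mathcal{T}$ is a DST rooted at $N$ and every non-root node $i$ has exactly one parent $k_i$, the parent sequence $i, k_i, k_{k_i}, \ldots$ is uniquely determined. It must reach $N$ after finitely many steps, say $p_i\le N-1$. Indeed, if it never reached $N$ it would have to revisit some node, forming a directed cycle. That contradicts the fact that $N$ has a directed path to every node in $\mathcal{T}$ and that each non-root node has in-degree one, so $\mathcal{T}$ has exactly $N-1$ edges and no cycles. Denote the ancestor chain by $i=v_0, v_1=k_{v_0}, \ldots, v_{p_i}=N$.

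\textbf{Step 2 (relative state to the root).} For every $i\in\{1,\ldots,N-1\}$ I will establish the telescoping identity
\begin{equation*}
x_N-x_i=\sum_{s=0}^{p_i-1}\left(x_{k_{v_s}}-x_{v_s}\right)=\sum_{s=0}^{p_i-1} y_{v_s}.
\end{equation*}
Setting $\sum\emptyset=0$ for $i=N$, this reads $x_N-x_i=\sum_{l=1}^{N-1}\gamma_{il}\,y_l$, where $\gamma_{il}=1$ if $l$ lies on the tree path from $N$ to $i$ (excluding $N$) and $\gamma_{il}=0$ otherwise. Equivalently, one can argue by induction on the depth of $i$ in $\mathcal{T}$, using $x_N-x_i=(x_N-x_{k_i})+y_i$, where $k_i$ has smaller depth.

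\textbf{Step 3 (arbitrary edge).} For any $(j,i)\in E$,
\begin{equation*}
x_{ji}=x_j-x_i=(x_N-x_i)-(x_N-x_j)=\sum_{l=1}^{N-1}(\gamma_{il}-\gamma_{jl})\,y_l .
\end{equation*}
This is a linear combination of the $y_l$ with coefficients in $\{-1,0,1\}$, so the claim holds. In matrix form, writing $y=(T\otimes I_n)x$, the edge state is $x_{ji}=\big((\gamma_i-\gamma_j)^T\otimes I_n\big)y$. This form will be convenient for the later transformation of the system.

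The only point requiring care is Step 1: the well-definedness and termination of the parent chain, which follows from the tree structure. The rest is direct telescoping.
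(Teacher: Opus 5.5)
Your proof is correct and is essentially the paper's argument: write $x_{ji}=(x_k-x_i)-(x_k-x_j)$ for a common ancestor $k$ and telescope each term along tree paths into fundamental edge states. The only differences are that you fix $k=N$ (the paper allows any common ancestor, with the root as its example) and you spell out why the parent chain terminates at $N$, which the paper takes for granted.
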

\begin{IEEEproof}
	As we know, any two vertices $i, j$ have a common grandparent node (such as the root node $N$ ) in $\mathcal{T}$. Consider the grandparent node $k$ with two shortest paths, say, $\left(k, l_1\right),\left(l_1, l_2\right) \cdots,\left(l_\alpha, i\right)$ and $\left(k, m_1\right),\left(m_1, m_2\right) \cdots,\left(m_\beta, j\right)$ respectively, then
	\begin{equation}
			\begin{aligned}
			x_{j i} &=\left(x_k-x_i\right)-\left(x_k-x_j\right) \\
			&=x_{k l_1}+x_{l_1 l_2}+\cdots+x_{l_\alpha i}-x_{k m_1}-x_{m_1 m_2}-\cdots-x_{m_\beta j} \\
			&=:\left(\gamma_{j i} \otimes I_n\right) y
		\end{aligned}
	\end{equation}
	where $\gamma_{j i}$ is the $N-1$ dimensional row vector with components in $\{-1,0,1\}$. The Proof is completed.
\end{IEEEproof}

Therefore, instead of the complete edge dynamic system (6), it is only necessary to consider the following fundamental edge state system (assume $N$ to be the root node)
\begin{equation}
	\dot{y}_i=A y_i-B u_i+B u_{k_i}, \quad i=1, \cdots, N-1
\end{equation}
We construct matrix $\Gamma_i=\left[\begin{array}{lll}\gamma_{1 i}^T & \cdots & \gamma_{N i}^T\end{array}\right]^T \in \mathbb{R}^{N \times(N-1)}$, where the row vector $\gamma_{j i}$ is given by (7) when $j \in N_i$, and addedly define $\gamma_{j i}=0$ when $j \notin N_i$. It can be seen that the matrix $\Gamma_i$ represents the information flow relationship that can be obtained by the agent $i$ corresponding to the DST $\mathcal{T}$, which is called the information flow matrix of the agent $i$ with respect to the DST.

Let $P_0=\left[p_{i j}\right] \in \mathbb{R}^{N \times(N-1)}$ be the incidence matrix corresponding to the DST $\mathcal{T}$, and its entries are defined as $$p_{i j}= \begin{cases}1, & \text { Node } i \text { is the starting point of edge } j \text { (out-edge) } \\ -1, & \text { Node } i \text { is the end point of edge } j \text { (in-edge) } \\ 0, & \text { Node } i \text { is not associated with edge } j\end{cases}$$ If the agents are renumbered appropriately, it can be assumed that the parent node $k_i$ of each non-root node $i \in\{1,2, \cdots, N-1\}$ satisfies $k_i>i$, that is, $P_0$ is a lower triangular matrix with diagonal element $-1$, element 1 in $\left(k_i, i\right)$ and zero in all other entries.
Thus, transpose of the incidence matrix $P_0$
\begin{equation}
	P_0^T=:\left[\begin{array}{lll}
		p_1 & \cdots & p_N
	\end{array}\right] \in \mathbb{R}^{(N-1) \times N}
\end{equation}
is the upper triangular matrix, where the $N-1$ dimension column vector $p_i=\left[p_{i 1}, \cdots, p_{i, N-1}\right]^T$ represents the association relationship between the node $i$ and all fundamental edges: $p_{i i}=-1$ implies that the fundamental edge $e_i=\left(k_i, i\right)$ is the in-edge of the vertex $i$; for $j \in\{1,2, \cdots, N-1\} \backslash\{i\}$, the fundamental edge $e_j$ is an out-edge of vertex $i$ if $p_{i j}=1$; and the fundamental edge $e_j$ is not associated with the vertex $i$ if $p_{i j}=0$.

Now the fundamental edge state system (8) can be written as a centralized system
\begin{equation}
	\begin{aligned}
		\dot{y}=\left(I_{N-1} \otimes A\right) y+\sum_{i=1}^N\left(p_i \otimes B\right) u_i
	\end{aligned}
\end{equation}
Furthermore, letting $w_i=\left[\begin{array}{lll}
	w_{i 1} & \cdots & w_{i N}
\end{array}\right] \in \mathbb{R}^{1 \times N}$
be the $ith$  row vector of the adjacency matrix $W=\left[w_{i j}\right]$, the information (3) is expressed as
\begin{equation}
	z_i=\sum_{j \in N_i} z_{j i}=\sum_{j \in N_i} w_{i j}\left(x_j-x_i\right)=\left(w_i \Gamma_i \otimes I_n\right) y
\end{equation}
It can be seen that the state consensus problem of MAS
(1) under the control protocol (4) is equivalent to the problem of decentralized output-feedback stabilization problem: design decentralized output-feedback control based on measured output (11) for system (10)
\begin{equation}
	u_i=K_i z_i, i=1, \cdots, N
\end{equation}
to make the closed-loop system
\begin{equation}
	\dot{y}=\left[I_{N-1} \otimes A+\sum_{i=1}^N\left(p_i w_i \Gamma_i \otimes B K_i\right)\right] y
\end{equation}
is asymptotically stable.
For the convenience of expression, the system (13) can be simply expressed as $\left(C^*, A^*, B^*\right)$, i.e.,
\begin{equation}
	\left\{\begin{array}{l}
		\dot{y}=A^* y+B^* u \\
		z=C^* y
	\end{array}\right.
\end{equation}
where
$$
\begin{aligned}
	A^* &=I_{N-1} \otimes A, 
	B^* =\left[B_1, \cdots, B_N\right]=P_0^T \otimes B \\
	C^* &=\left[C_1^T, \cdots, C_N^T\right]^T ,B_i =p_i \otimes B,\\ 
	C_i&=w_i \Gamma_i \otimes I_n, i=1, \cdots, N
\end{aligned}
$$
The centralized form of control (12) is expressed as
\begin{equation}
	\begin{aligned}
		u=& K_{\mathrm{D}} z, K_{\mathrm{D}} \in \mathbf{K}_{\mathrm{D}} \\
		\mathbf{K}_{\mathrm{D}}:=&\left\{K_{\mathrm{D}}=\operatorname{diag}\left(K_1, \cdots, K_N\right):\right.\\
		&\left.K_i \in \mathbb{R}^{m \times n}, i=1, \cdots, N\right\}
	\end{aligned}
\end{equation}
So the system matrix of the closed-loop system (13) is
\begin{equation}
	\begin{aligned}
		&I_{N-1} \otimes A+\sum_{i=1}^N\left(p_i w_i \Gamma_i \otimes B K_i\right) \\
		&=I_{N-1} \otimes A+\sum_{i=1}^N B_i K_i C_i \\
		&=A^*+B^* K_{\mathrm{D}} C^*
	\end{aligned}
\end{equation}
It is well known that the results of decentralized output stabilization are closely related to DFMs. Next, we introduce the concept of DFM to analyze the stability of the system (13).
\begin{de}
	The set of decentralized fixed modes for system $\left(C^*, A^*, B^*\right)$ with the block diagonal feedback gain matrix set $\mathbf{K}_{\mathrm{D}}$ in (15) is defined as
	$\Lambda\left(C^*, A^*, B^* ; \mathbf{K}_{\mathrm{D}}\right)=\bigcup_{K \in \mathbf{K}_{\mathrm{D}}} \sigma\left(A^*+B^* K C^*\right)$
	where $\sigma\left(A^*+B^* K C^*\right)$ denotes the set of eigenvalues of matrix $A^*+B^* K C^*$.
\end{de}

The following lemma can be established from Definition 1.
\begin{lem}
	MAS (1) under protocol (4) can asymptotically achieve state consensus if and only if the DFMs of system (14) does not contain the one in
	$\left\{\lambda: \lambda \in \sigma\left(A^*\right), \operatorname{Re} \lambda \geq 0\right\}$.
\end{lem}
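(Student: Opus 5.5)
The plan has two parts. First I reduce consensus to stabilizability of (14) by decentralized output feedback from $\mathbf{K}_{\mathrm{D}}$. Then I invoke the Wang--Davison theorem: a system is decentralized-output-feedback stabilizable if and only if it has no DFMs in the closed right half-plane. I read the set in Definition 1 in the standard way, as the intersection $\bigcap_{K \in \mathbf{K}_{\mathrm{D}}} \sigma(A^*+B^*KC^*)$, i.e. the eigenvalues fixed under every admissible gain. Taking $K=0$ shows the DFMs lie in $\sigma(A^*)=\sigma(A)$, which explains why the lemma only has to rule out those $\lambda \in \sigma(A^*)$ with $\operatorname{Re}\lambda\ge 0$.

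\emph{Step 1 (equivalence of consensus and stability of $y$).} Under protocol (4), each $u_i$ equals $K_i z_i$, and $z_i=(w_i\Gamma_i\otimes I_n)y$ by (11) and Lemma 1. Hence the fundamental edge states obey the autonomous closed-loop system (13), whose matrix is $A^*+B^*K_{\mathrm D}C^*$ by (16).

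If consensus (5) holds, then every $y_i=x_{k_i}-x_i\to 0$ for all initial conditions. Since $y(0)$ ranges over all of $\mathbb{R}^{n(N-1)}$ as the $x_i(0)$ vary (fix $x_N(0)$ and solve down the tree), this makes (13) asymptotically stable.

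Conversely, if (13) is asymptotically stable, then $y\to0$. By Lemma 1 every $x_i-x_j=(\gamma\otimes I_n)y$ for a suitable row vector $\gamma$ (taking paths through the root), so (5) holds.

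Thus consensus for some gains $K_i$ is equivalent to the existence of $K_{\mathrm D}\in\mathbf{K}_{\mathrm D}$ making $A^*+B^*K_{\mathrm D}C^*$ Hurwitz.

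\emph{Step 2 (apply the DFM theorem).}
\begin{itemize}
\item Necessity: if some DFM $\lambda$ has $\operatorname{Re}\lambda\ge0$, then $\lambda$ is an eigenvalue of the closed loop for every $K_{\mathrm D}$, so no gain stabilizes (13), and by Step 1 consensus fails.
\item Sufficiency: this is the nontrivial direction of Wang--Davison. If every DFM has negative real part, there exists $K_{\mathrm D}$ placing all non-fixed closed-loop eigenvalues in the open left half-plane, so (13) is stable.
\end{itemize}
The theorem is usually stated with dynamic compensators. For static gains I would cite the known generic result: after a generic static $K$, the remaining modes are controllable and observable through a single channel, and a perturbation argument then moves them. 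Alternatively, I would note that the paper's protocol class is taken as the setting where the theorem is invoked.

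\emph{Main obstacle.} The main obstacle is this last point. The classical sufficiency proof allows dynamic decentralized compensators, whereas (4) uses static gains. I would handle it by invoking the stated DFM characterization for system (14) as a known result, rather than reproving it. Everything else is the bookkeeping of Step 1.
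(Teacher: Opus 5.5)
Your Step 1, the necessity half of Step 2, and your reading of Definition 1 as an intersection are all correct. The paper's $\bigcup$ is a slip for $\bigcap$. Your overall route is exactly what the paper means when it says the lemma ``can be established from Definition 1'': consensus $\Leftrightarrow$ stabilizing (13) with $K_{\mathrm{D}}\in\mathbf{K}_{\mathrm{D}}$, then DFM theory.

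\textbf{The gap is in sufficiency, and the paper's one-line justification shares it.} The Wang--Davison theorem only yields \emph{dynamic} decentralized compensators. For static gains, ``no fixed modes with $\operatorname{Re}\lambda\ge 0$'' does not imply stabilizability, even with a single channel. Take $\dot q_1=q_2$, $\dot q_2=u$, $z=q_1$: it is controllable and observable, yet every $u=kz$ gives the characteristic polynomial $s^2-k$, which is never Hurwitz. Your fallback fails for the same reason. After a generic static $K$ makes the system controllable and observable from one channel, the classical proof finishes with an observer-based compensator on that channel. A perturbation argument only moves eigenvalues locally, not into the left half-plane. Declaring protocol (4) ``the setting where the theorem is invoked'' does not supply a proof either.

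\textbf{The missing idea is to use the MAS structure (full relative-state measurements plus the DST), not general DFM theory.} The argument has two steps.

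First, the DFM condition forces $(A,B)$ to be stabilizable. Suppose $v^{H}A=\lambda v^{H}$ and $v^{H}B=0$ with $\operatorname{Re}\lambda\ge 0$. For every $q\in\mathbb{C}^{N-1}$ you get $(q\otimes v)^{H}B^*=q^{H}P_0^T\otimes v^{H}B=0$. Hence $(q\otimes v)^{H}(A^*+B^*KC^*)=\lambda(q\otimes v)^{H}$ for all $K\in\mathbf{K}_{\mathrm{D}}$, so $\lambda$ is an unstable DFM.

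Second, stabilizability of $(A,B)$ yields static gains. Take identical gains $K_i=cB^TP$, where $P>0$ solves $A^TP+PA-PBB^TP+I=0$. With the Laplacian $L=\operatorname{diag}(W\mathbf{1})-W$, the closed loop is $\dot x=(I_N\otimes A-L\otimes BK)x$, and $y=(P_0^T\otimes I_n)x$. Since $L\mathbf{1}=0$ and $\ker P_0^T=\operatorname{span}\{\mathbf{1}\}$, you can write $L=MP_0^T$. Then (13) becomes $I_{N-1}\otimes A-(P_0^TM)\otimes cBB^TP$. Here $\sigma(P_0^TM)$ is $\sigma(L)$ with its simple zero removed, so every $\mu\in\sigma(P_0^TM)$ has $\operatorname{Re}\mu>0$ because a DST exists. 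For each such $\mu$,
$(A-c\mu BB^TP)^{H}P+P(A-c\mu BB^TP)=-I+(1-2c\operatorname{Re}\mu)PBB^TP\preceq -I$
once $2c\operatorname{Re}\mu\ge 1$. After a Schur triangularization of $P_0^TM$, this shows (13) is Hurwitz with static gains.

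This proves the lemma for protocol (4) as stated. It also shows that, under the standing DST assumption, the DFM condition is equivalent to stabilizability of $(A,B)$.
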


According to \cite{b13} and Lemma 2, a sufficient and necessary condition for MAS to achieve state consensus has been obtained in \cite{b12}.
\begin{theo}
	MAS(1) under protocol (4) can asymptotically achieve state consensus if and only if
	$$
	\operatorname{rank}\left[\begin{array}{cc}
		I_{N-1} \otimes\left(\lambda_0 I-A\right) & P_\alpha \otimes B \\
		\Phi_\beta \otimes I_n & 0
	\end{array}\right] \geq(N-1) n
	$$
	for any $\lambda_0 \in\{\lambda: \lambda \in \sigma(A), \operatorname{Re} \lambda \geq 0\}$ and any bipartite segmentation $(\alpha, \beta)$ of the vertex set $V$, where
	$$
	P_\alpha=\left[p_{i_1}, \ldots, p_{i_k}\right], \Phi_\beta=\left[\begin{array}{c}
		w_{i_{k+1}} \Gamma_{i_{k+1}} \\
		\vdots \\
		w_{i_N} \Gamma_{i_N}
	\end{array}\right]
	$$
\end{theo}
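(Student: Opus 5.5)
The plan is to combine Lemma 2 with the algebraic characterization of decentralized fixed modes due to Anderson and Clements (the result cited as \cite{b13}). That result states that $\lambda_0$ is a DFM of a system $(C^*,A^*,B^*)$ with $N$ control stations if and only if there is a partition $\{i_1,\dots,i_k\}\cup\{i_{k+1},\dots,i_N\}$ of the station indices for which
$$
\operatorname{rank}\left[\begin{array}{cc}
\lambda_0 I-A^* & B_\alpha \\
C_\beta & 0
\end{array}\right]<\dim A^*.
$$
Here $B_\alpha=[B_{i_1},\dots,B_{i_k}]$ and $C_\beta$ stacks $C_{i_{k+1}},\dots,C_{i_N}$. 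The trivial partitions, with $\alpha$ or $\beta$ empty, are included; they recover the centralized controllability and observability tests. By Lemma 2, consensus holds if and only if no $\lambda_0\in\sigma(A^*)$ with $\operatorname{Re}\lambda_0\ge 0$ is a DFM. Equivalently, the rank condition above must hold (rank $\ge\dim A^*$) for every such $\lambda_0$ and every bipartition $(\alpha,\beta)$.

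Next we specialize the blocks to system (14). We have $\dim A^*=(N-1)n$ and $\lambda_0 I-A^*=I_{N-1}\otimes(\lambda_0 I-A)$. Since $\sigma(A^*)=\sigma(A)$, the range of $\lambda_0$ is exactly the one in the theorem. For the input blocks, $B_i=p_i\otimes B$, so by the mixed-product rule $B_\alpha=[p_{i_1}\otimes B,\dots,p_{i_k}\otimes B]=P_\alpha\otimes B$. For the output blocks, $C_i=w_i\Gamma_i\otimes I_n$, so stacking gives $C_\beta=\Phi_\beta\otimes I_n$. Substituting these yields precisely the matrix in the statement, and negating the DFM criterion gives the ``$\ge (N-1)n$'' inequality. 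Necessity and sufficiency both follow, because Lemma 2 and the Anderson--Clements characterization are each equivalences.

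The main point needing care is that the Anderson--Clements theorem applies in this setting. Each station $i$ has input matrix $B_i$ and output matrix $C_i$ of compatible sizes, with gains $K_i\in\mathbb{R}^{m\times n}$ unrestricted, so $\mathbf{K}_{\mathrm D}$ is exactly the block-diagonal class of that theorem. We should also check that Definition 1 matches the standard notion of a fixed mode, namely $\bigcap_K\sigma(A^*+B^*KC^*)$. The union written in Definition 1 should be read as this intersection, and we use the intersection form when invoking \cite{b13}.

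A second subtlety is that Lemma 2 relies on the fact that non-fixed modes can be moved into the open left half-plane simultaneously by a single $K_{\mathrm D}$. This is the Wang--Davison theorem, which we take as assumed via Lemma 2. With both ingredients in place, the proof reduces to the Kronecker bookkeeping described above.
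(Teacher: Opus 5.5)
Your proposal is correct and takes essentially the same route as the paper. The paper gives no separate proof of Theorem 1: it obtains the result by combining Lemma 2 with the Anderson--Clements rank characterization of decentralized fixed modes that it cites, and your Kronecker identifications $\lambda_0 I - A^* = I_{N-1}\otimes(\lambda_0 I - A)$, $B_\alpha = P_\alpha\otimes B$, $C_\beta = \Phi_\beta\otimes I_n$ (with trivial partitions included) are exactly the bookkeeping it leaves implicit. You are also right that the union in Definition 1 must be read as an intersection for Lemma 2 and the cited characterization to hold.
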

Next, we will discuss a fully decentralized design of the distributed consensus protocol by using only some but not all of the neighbor information.
\section{Decentralized consensus protocol design via minimal communication links}
In this section, we propose a fully decentralized design approach for the consensus protocol of MAS (1). Precisely, instead of the control protocol (12) where the information of all neighbors of each agent is used, we show that, due to the unique role the DST plays in the consensus process, consensus can also be reached by using only the information that an agent obtains from their neighbors on the DST. For the convenience of discussion, we distinguish between two cases: 1) the root node of the DST is also a root vertex in the original communication topology; 2) the root node of the DST is not a root vertex in the original communication topology.

Under the assumption that there is a DST $\mathcal{T}=\left(V, E_\mathcal{T}, W_\mathcal{T}\right)$, if the control protocol of each agent only uses the neighbor information on the DST, the information flow matrix $\Gamma_N=0$, and for $i \in\{1,2, \cdots, N-1\}$ only the $ith$ element of the vector in $\Gamma_i$ is 1, and the other vectors are 0, i.e. 
$w_i \Gamma_i=\left[\begin{array}{lll}0  \cdots 0& w_{i, k_i} &0 \cdots0\end{array}\right]$. Thus, the control (12) becomes 
\begin{equation}
	\begin{aligned}
		&u_N=0 \\
		&u_i=K_i w_{i, k_i} y_i, i=1, \cdots, N-1
	\end{aligned}
\end{equation}
\begin{theo}
	Suppose the root node $N$ of the DST $\mathcal{T}$ is a root vertex in the communication topology $\mathcal{G}$. If the control gain matrix $K_i, i=1,2, \cdots, N-1$ in (17) are selected such that $A-w_{i k_i} B K_i, i=1,2, \cdots, N-1$ are Hurwitz, then MAS (1) asymptotically achieves state consensus under the control protocol (17).
\end{theo}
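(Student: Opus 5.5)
The plan is to work in the fundamental edge coordinates $y$ and show directly that the closed-loop fundamental edge system (8) under protocol (17) is block triangular with Hurwitz diagonal blocks. By Lemma 1 every edge state $x_{ji}$ is a linear combination of the $y_i$. Hence $y(t)\to 0$ implies $x_j-x_i\to 0$ for all pairs. Indeed, any two vertices are joined through the root by tree paths, so consensus (5) follows.

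\textbf{Closed loop.} Substitute (17) into (8). For $i\in\{1,\dots,N-1\}$ we have $u_i=w_{i,k_i}K_i y_i$. The parent either satisfies $k_i<N$, in which case $u_{k_i}=w_{k_i,k_{k_i}}K_{k_i}y_{k_i}$, or $k_i=N$, in which case $u_{k_i}=u_N=0$ because the root uses no information. Therefore
\begin{equation*}
\dot y_i=(A-w_{i,k_i}BK_i)\,y_i+B\,u_{k_i},\qquad u_{k_i}=\begin{cases}w_{k_i,k_{k_i}}K_{k_i}y_{k_i},& k_i<N,\\ 0,& k_i=N.\end{cases}
\end{equation*}
With the numbering convention $k_i>i$, the coupling term of $\dot y_i$ depends only on $y_{k_i}$ with $k_i>i$. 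In compact form the closed-loop matrix $A^*+B^*K_{\mathrm D}C^*$ is block upper triangular, with diagonal blocks $A-w_{i,k_i}BK_i$ for $i=1,\dots,N-1$. Its spectrum is therefore $\bigcup_i\sigma(A-w_{i,k_i}BK_i)$, which lies in the open left half plane by hypothesis. Hence $y\to 0$ exponentially.

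\textbf{Role of the root-vertex hypothesis.} This is the step needing care. The protocol prescribes $u_N=0$, so agent $N$ evolves autonomously as $\dot x_N=Ax_N$. This is consistent only because $N$ receives no information in $\mathcal G$, i.e.\ it is a root vertex with $N_N=\emptyset$. The convergence argument itself does not need $x_N$ to be bounded, since it only concerns relative states. I will remark that the triangular structure relies precisely on $u_N=0$. Otherwise $u_N$ would feed, through $B_N=p_N\otimes B$, into every $y_j$ with $k_j=N$ and break the cascade. That case is what the second scenario of the section treats.

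\textbf{Main obstacle.} I expect the main obstacle to be verifying the triangular structure cleanly from the matrices $p_i$, $w_i\Gamma_i$ rather than from (8). I would check that $B_iK_iC_i=(p_i\,w_{i,k_i}e_i^T)\otimes BK_i$. Here $p_i$ has $-1$ in position $i$ and $+1$ in the positions $j<i$ with $k_j=i$, which produces block $(i,i)$ and blocks $(j,i)$ with $j<i$. This gives upper triangularity, and also a cascade argument where one solves for $y_{N-1},y_{N-2},\dots$ in decreasing order. Each $y_i$ is then a Hurwitz system driven by an input that already decays exponentially.
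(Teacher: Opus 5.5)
Your proposal is correct and follows essentially the same route as the paper. Both arguments write the closed loop in fundamental-edge coordinates, observe that it is block upper triangular because $k_j>j$, note that the diagonal blocks are $A-w_{j,k_j}BK_j$, and conclude that $y\to 0$ and hence consensus. Your version is slightly cleaner in that you treat the case $k_i=N$ explicitly, where $u_N=0$ removes the coupling term; the paper's (18) leaves this implicit. One small correction: the argument never actually uses the root-vertex hypothesis. Your remark that $u_N=0$ is ``consistent only because'' $N_N=\emptyset$ is therefore an interpretive comment, not a logical requirement of the proof.
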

\begin{IEEEproof}
	Set $p_{i j}$ as the $j$ th row element of the vector $p_i$, $\Gamma_{i k}$ is the $k$ th column vector of the matrix $\Gamma_i$, thus we have $ p_i w_i \Gamma_i=\left[p_{i j} w_i \Gamma_{i k}\right]_{j, k=1}^{N-1}$. Noting that $p_{i j}=-1, p_{k_j j}=1, p_{i j}=0$ for the rest $i$; and $w_i \Gamma_{i k_i}=w_{i, k_i}, w_i \Gamma_{i k}=0$ for the rest $k$, then (13) can be expressed in the form
	\begin{equation}
		\begin{aligned}
			\dot{y}_j &=A y_j+\sum_{i=1}^N \sum_{k=1}^{N-1}\left(p_{i j} w_i \Gamma_{i k}\right) B K_i y_k \\
			&=\left(A-w_{j k_j} B K_j\right) y_j+w_{k_j \kappa\left(k_j\right)} B K_{k_j}  y_{k_j}
		\end{aligned}
	\end{equation}
	where $\kappa\left(k_j\right)$ represents the parent node of the node $k_j$ on the DST and $j=1, \cdots, N-1$.
	Since it is assumed that $k_j>j$ for $j \in\{1,2, \cdots, N-1\}$, the system matrix of (13) is an upper triangular block matrix according to (18), and $A-w_{j k_j} B K_j, j=1,2, \cdots, N-1$ is the block matrix on the diagonal. So MAS (1) asymptotically achieve state consensus under the control protocol (17) if the control gain matrix $K_i, i=1,2, \cdots, N-1 \quad$ are selected such that $A-w_{i k_i} B K_i, i=1,2, \cdots, N-1$ is Hurwitz. The Proof is completed.
\end{IEEEproof}

Next, we deal with the general case where the root node $N$ of the DST is not a root vertex in the original communication topology. Let $n_l \in\{1,2, \cdots, N-1\}$ be one of the neighbors of root node $N$ in the original communication topology, and $\left(N, n_1\right),\left(n_1, n_2\right) \cdots,\left(n_{l-1}, n_l\right)$ be the shortest path from $N$ to $n_l$ on the DST. Then any edge state can be linearly represented by the fundamental edge states  according to Lemma 1, so we have
\begin{equation}
	w_{N, n_l}\left(x_N-x_{n_l}\right)=w_{N, n_l}\left(y_{n_1}+y_{n_2}+\cdots+y_{n_l}\right)
\end{equation}
Thus we can design the following control protocol
\begin{equation}
	\begin{aligned}
		&u_N=K_N w_{N_i n_l}\left(y_{n_1}+y_{n_2}+\cdots+y_{n_l}\right) \\
		&u_i=K_i w_{i, k_i} y_i, \quad i=1, \cdots, N-1
	\end{aligned}
\end{equation}
Suppose $\alpha$ is a row block matrix with blocks $-w_{N, n_l} B K_N$ at the columns $n_1, \cdots, n_l$ and zero at the other columns and $\beta_i$ is row block matrix with blocks $w_{k_k,\left(k_1\right)} B K_{k_1}$ at the columns $k_i$ and zero at the other columns. Suppose there are $p-1$ nodes whose parent node on DST is $N$. Without loss of generality, let $k_i \neq N$ for $i=1, \cdots, N-p$ and $k_i=N$ for $i=N-p+1, \cdots, N-1$. Thus, in this case, we have
\begin{equation}
	A^*+B^* K_{\mathrm{D}} C^*=\left[\begin{array}{c}
		\Phi+\beta^* \\
		\alpha^*+Q
	\end{array}\right]
\end{equation}
where
$$\Phi:=\operatorname{diag}\left\{A-w_{i i_i} B K_i, i=1, \cdots, N-p\right\}$$
$$\Psi=\operatorname{diag}\left\{A-w_{i N} B K_i, i=N-p+1, \cdots, N-1\right\}$$
$$Q=\left[\begin{array}{ll}0_{n p \times n(N-p-1)} & \Psi\end{array}\right]$$
$$\left.\beta^*:=\left[\begin{array}{c}
	\beta \\
	\vdots \\
	\beta_{N-1-p}
\end{array}\right], \alpha^*:=\left[\begin{array}{c}
	\alpha \\
	\vdots \\
	\alpha
\end{array}\right]\right\} \text { there are } p \text { blocks }$$ 
Obviously, when all eigenvalues of the matrix in (21) are located in the left half plane, MAS (1) asymptotically achieves state consensus under the control protocol (19). However, the size of the eigenvalues of matrix (21) belongs to the global information, and we expect to use only local information to obtain criteria. Next, we use Gerschgorlin circle theorem\cite{b14} to deal with this problem.

For any matrix $A=\left[a_{i j}\right] \in \mathbb{C}^{m \times n}$, we denote that 
$$
\|A\|=\max _{1 \leq i \leq m} \sum_{j=1}^n\left|a_{i j}\right|,\left(\left\|A^{-1}\right\|\right)^{-1}=\min _{1 \leq i \leq m} \sum_{j=1}^n\left|a_{i j}\right|
$$
Then we can get the following results.
\begin{theo}
	Suppose the root node $N$ of the DST is not a root vertex in the original communication topology and let $n_l \in\{1,2, \cdots, N-1\}$ be one of its neighbors and $\lambda$ are the eigenvalues to be estimated of matrix (21).
	If for $p-1$ nodes whose parent node on DST is $N$, the gain matrices $K_i,i=N-p+1,..., N $ in (20) are selected such that the following Gerschgorlin circles
	\begin{equation}
		\left(\left\|\left(A-w_{i N} B K_i-\lambda I_n\right)^{-1}\right\|\right)^{-1} \leqq \sum_{m=1}^l\left\|-w_{N, n_m} B K_N\right\|
	\end{equation}
	are all located in the left half plane, and for the remaining $N-p$ nodes, the gain matrices $K_i, i=1,2,..., N-p$ in (20) are selected such that the following Gerschgorlin circles
	\begin{equation}
		\left(\left\|\left(A-w_{i k_i} B K_i-\lambda I_n\right)^{-1}\right\|\right)^{-1} \leqq\left\|w_{k_i \kappa\left(k_i\right)} B K_{k_i}\right\|
	\end{equation}
	are all located in the left half plane, then MAS (1) asymptotically achieves state consensus under the control protocol (19).
\end{theo}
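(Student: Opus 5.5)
The plan is to reduce consensus to Hurwitz stability of the closed-loop matrix (21), using the equivalence established before Definition 1 and Lemma 2. Under protocol (20), the fundamental edge system is $\dot y=(A^*+B^*K_{\mathrm D}C^*)y$. Consensus holds if and only if this matrix is Hurwitz. It therefore suffices to show that every eigenvalue $\lambda$ of (21) satisfies $\operatorname{Re}\lambda<0$. I will argue by contradiction. Suppose some $\lambda$ with $\operatorname{Re}\lambda\ge 0$ is an eigenvalue. Then a block (generalized) Gerschgorin theorem \cite{b14} applied to (21) will force $\lambda$ into one of the circles (22) or (23). Since those circles lie in the open left half plane by hypothesis, this is impossible.

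First I will write out the block rows of (21) explicitly, following the computation in (18). For a node $j$ with $k_j\neq N$, the $j$th block row has diagonal block $A-w_{jk_j}BK_j$ and a single off-diagonal block $w_{k_j\kappa(k_j)}BK_{k_j}$ in column $k_j$; this is the row $\beta_j$. For a node $j$ with $k_j=N$, the diagonal block is $A-w_{jN}BK_j$ from $\Psi$. The off-diagonal part comes from $u_N$ entering through $p_N$. This gives the row $\alpha$, with blocks $-w_{N,n_l}BK_N$ in columns $n_1,\dots,n_l$. I must also handle the case where some $n_m$ coincides with $j$ itself, which shifts the diagonal block. In that case the estimate in (22) is to be read with that block absorbed into the sum on the right, which is how the sum $\sum_{m=1}^l\|{-w_{N,n_m}BK_N}\|$ arises.

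Next I will invoke the block Gerschgorin theorem (Feingold–Varga) with the row-sum norm $\|\cdot\|$ defined before the theorem. It states that every eigenvalue $\lambda$ of a block matrix $[M_{jk}]$ satisfies, for some block row $j$,
\[
\left(\left\|(M_{jj}-\lambda I)^{-1}\right\|\right)^{-1}\le \sum_{k\ne j}\|M_{jk}\|,
\]
where the left side is taken as $0$ if $M_{jj}-\lambda I$ is singular. The proof of this theorem is short. Take an eigenvector $v=(v_1,\dots)$ and choose $j$ maximizing $\|v_j\|_\infty$. From $(M_{jj}-\lambda I)v_j=-\sum_{k\ne j}M_{jk}v_k$, apply $\|(M_{jj}-\lambda I)^{-1}\|^{-1}\|v_j\|\le\|(M_{jj}-\lambda I)v_j\|$ and bound the right side by $\sum_{k\ne j}\|M_{jk}\|\,\|v_j\|$. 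I will include this argument since the paper's notation for $(\|A^{-1}\|)^{-1}$ is nonstandard. Substituting the block rows from the first step turns the $j$th inclusion region into exactly (23) for $j\le N-p$ and (22) for $j>N-p$.

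Finally, the hypothesis says all these regions lie in $\{\operatorname{Re}\lambda<0\}$. So every eigenvalue of (21) has negative real part, the matrix is Hurwitz, $y(t)\to0$, and Lemma 1 then gives $x_i-x_j\to0$ for all $i,j$. I expect the main obstacle to be the bookkeeping in the first step. The rows must be identified correctly with $\Phi,\Psi,\alpha^*,\beta^*$. The off-diagonal norms must be the correct ones, including possible overlap between columns $n_m$ and the diagonal index, and the fact that $u_N$ affects every child of $N$. The Gerschgorin step itself is standard once the rows are fixed.
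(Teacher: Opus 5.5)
Your proposal is correct and follows the paper's argument: apply the block Gerschgorin (Feingold--Varga) theorem to the closed-loop matrix (21), identify the inclusion regions with (22) and (23), and conclude that (21) is Hurwitz, so $y\to 0$ and consensus follows. Your version is more careful than the paper's in two ways: you use the genuine quantity $\|(M_{jj}-\lambda I)^{-1}\|^{-1}$ (the paper's claimed identity with the minimal row sum is false and would shrink the regions), and you handle the $u_N$ term in the diagonal block of the path node $n_1$, which can legitimately be absorbed into the radius $l\,\|w_{N,n_l}BK_N\|$ of (22) since $\|(D-\lambda I)^{-1}\|^{-1}\le\|(D+E-\lambda I)^{-1}\|^{-1}+\|E\|$.
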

\begin{IEEEproof}
We note that the matrix (21) is a block matrix, so the Gerschgorlin circles can be denoted as (22) and (23) according to the Gerschgorlin circle theorem of block matrix \cite{b14}, and we note that for each individual, we can design its feedback gain matrix and the neighbor's (on the DST) feedback gain matrix instead of designing all the gain matrices at the same time to adjust the position of the corresponding Gerschgorlin circle so that it is located in the left half plane. And as long as all Gerschgorlin circles are located in the left half plane, all eigenvalues of the matrix are also located in the left half plane. So we can show that the matrix (21) is Hurwitz if the control gain matrix $K_i, i=1,2, \cdots, N$ satisfies the conditions in Theorem 3, and thus MAS (1) asymptotically achieves state consensus under the control protocol (19). The proof is completed.
\end{IEEEproof}

We will further show the selection method in Example 2 in Section 4.

Theorems 2 and 3 show that MAS(1) can achieve state consensus even when designing a distributed protocol using only some neighbor information instead of all neighbor information. This greatly reduces the computation amount of the agents and the required communication load. It is worth noting that not only the protocol is distributed, but also its design procedure is fully decentralized and each agent may have a different feedback gain matrix. However, in most of the literature, all the agents adopt the same feedback gain matrix.

\section{Numerical examples}
This section presents some numerical examples to verify the validity of the proposed theoretical results.

Example 1. Consider a MAS consisting of four agents. The dynamics of each agent is described by
$$
\dot{x}_i=\left[\begin{array}{cc}
	0 & 1 \\
	-1 & 0
\end{array}\right] x_i+\left[\begin{array}{l}
	1 \\
	1
\end{array}\right] u_i
$$
The communication topology of the four agents is shown in Figure 1, and its adjacency matrix is
$$
W=\left[\begin{array}{llll}
	0 & 1 & 1 & 0 \\
	1 & 0 & 1 & 0 \\
	0 & 0 & 0 & 1 \\
	0 & 0 & 0 & 0
\end{array}\right]
$$
Obviously, it has a DST with fundamental edges $(4,3)(3,2)(3,1)$ and vertex 4 is a root node, $w_{3 k_3}=w_{34}=1, w_{2 k_2}=w_{23}=1, w_{1 k_1}=w_{13}=1$
\begin{figure}
	\centerline{\includegraphics[width=0.28\linewidth]{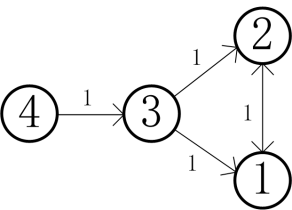}}
	\caption{The communication topology of Example 1}
	\label{fig}
\end{figure}
We select
$$
K_1=\left[\begin{array}{ll}
	1.5 & 0.5
\end{array}\right], K_2=\left[\begin{array}{ll}
	4 & 0
\end{array}\right], K_3=\left[\begin{array}{ll}
	2.625 & 0.375
\end{array}\right]
$$
Such that
$$
A-B K_1, A-B K_2, A-B K_3
$$
are Hurwitz. Figure 2 and Figure 3 show the states of the consensus procedure with initial states
$$
\begin{aligned}
	&x_{10}=7.5, y_{10}=13.8, x_{20}=14, y_{20}=9, \\
	&x_{30}=0, y_{30}=6.5, x_{40}=8, y_{40}=5.4 .
\end{aligned}
$$
\begin{figure}
	\centerline{\includegraphics[width=0.55\linewidth]{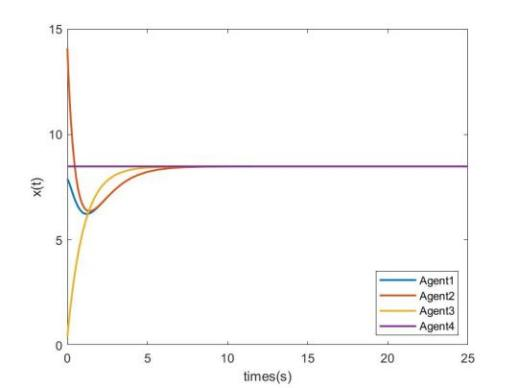}}
	\caption{The first components of the states}
	\label{fig}
\end{figure}
\begin{figure}
	\centerline{\includegraphics[width=0.55\linewidth]{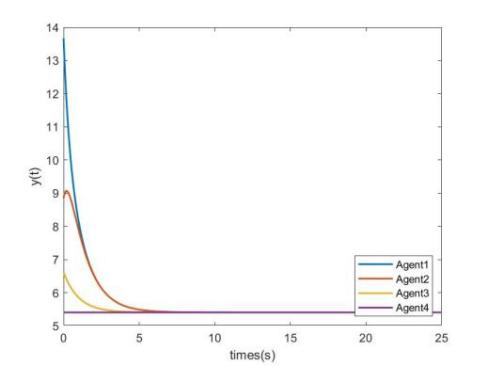}}
	\caption{The second components of the states}
	\label{fig}
\end{figure}

A more general example as follows which the root node of DST can receive neighbor information

Example 2. Consider a MAS consisting of six agents, and the dynamics of each agent is
$$
\dot{x}_i=\left[\begin{array}{cc}
	0 & 1 \\
	-1 & 0
\end{array}\right] x_i+\left[\begin{array}{c}
	1 \\
	1
\end{array}\right] u_i
$$
The communication topology of the agents is shown in Figure 4
\begin{figure}
	\centerline{\includegraphics[width=0.28\linewidth]{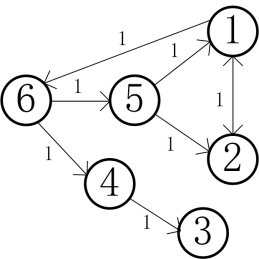}}
	\caption{ The communication topology of Example 2.}
	\label{fig}
\end{figure}
Obviously, it has a DST with fundamental edges $(6,5)(6,4)(4,3)(5,2)(5,1)$, where vertex 6 is the root node, and
$w_{5 k_5}=w_{56}=1, w_{4 k_4}=w_{46}=1, w_{3 k_3}=w_{34}=1$, $w_{2 k_2}=w_{25}=1, w_{1 k_1}=w_{15}=1$. Furthermore, vertex 6 is not the root node in the original communication topology and it has a neighbor 1 with $w_{61}=1$. In this case, the matrix in (21) is expressed as
\begin{small} 
	$$\left[\begin{array}{ccccc}A-B K_1 & 0 & 0 & 0 & B K_5 \\ 0 & A-B K_2 & 0 & 0 & B K_5 \\ 0 & 0 & A-B K_3 & B K_4 & 0 \\ -B K_6 & 0 & 0 & A-B K_4 & -B K_6 \\ -B K_6 & 0 & 0 & 0 & A-B K_5-B K_6\end{array}\right]$$
\end{small}
According to the conditions in Theorem 3, the following Gerschgorin circles all should be located in the left half-plane
$$
\begin{gathered}
	\left(\left\|\left(A-B K_i-\lambda I_n\right)^{-1}\right\|\right)^{-1} \leqq\left\|B K_{k_1}\right\|, i=1,2,3 \\
	\left(\left\|\left(A-B K_4-\lambda I_n\right)^{-1}\right\|\right)^{-1} \leqq 2 *\left\|-B K_6\right\| \\
	\left(\left\|\left(A-B K_5-B K_6-\lambda I_n\right)^{-1}\right\|\right)^{-1} \leqq\left\|-B K_6\right\|
\end{gathered}
$$
Next, we show the steps to design the feedback gain matrix separately for each individual compared with the case where the feedback gain matrices are the same. At first, we select the feedback gain matrix of node $\mathrm{N}$
$$
K_6=\left[\begin{array}{ll}
	1 & 0
\end{array}\right]
$$
Then we can select $K_4, K_5$ to make the Gerschgorin circle lie in the left half plane. 
$$
\left(\left\|\left(A-B K_5-B K_6-\lambda I_n\right)^{-1}\right\|\right)^{-1} \leqq 1
$$
$$
\left(\left\|\left(A-B K_4-\lambda I_n\right)^{-1}\right\|\right)^{-1} \leqq 2
$$
Next, the same method can be used to obtain $K_1, K_2, K_3$
In this example, we select
$K_1=[4\quad0]$, $K_2=[2.625\quad0.375]$, $K_3=[2.5\quad0.5]$, $K_4=[2.5\quad0.5]$, $K_5=[1.5\quad 0.5]$, $K_6=[1\quad0]$.

Figure 5 and Figure 6 show the states of the consensus procedure with initial states
$$
\begin{aligned}
	&x_{10}=7.8, y_{10}=4, x_{20}=10, y_{20}=7, x_{30}=19, y_{30}=18, \\
	&x_{40}=1, y_{40}=15, x_{50}=5.5, y_{50}=8.2 x_{60}=11, y_{60}=19
\end{aligned}
$$
\begin{figure}
	\centerline{\includegraphics[width=0.55\linewidth]{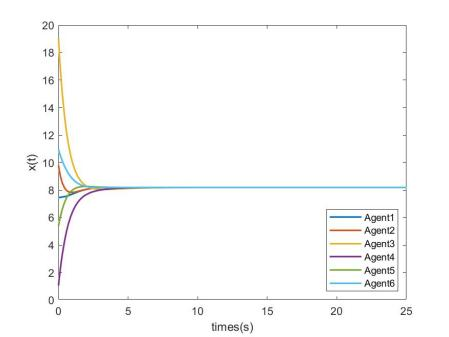}}
	\caption{The first components of the states}
	\label{fig}
\end{figure}
\begin{figure}
	\centerline{\includegraphics[width=0.55\linewidth]{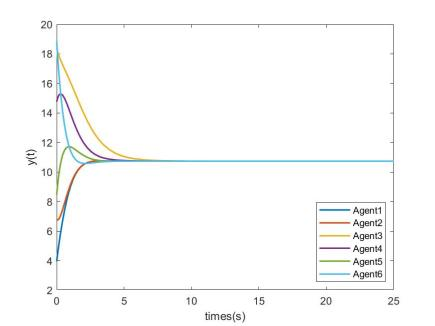}}
	\caption{The second components of the states}
	\label{fig}
\end{figure}
If we select $K_6=0$ and let $K_i, i=1,2,3,4,5$ keep unchanged, we can get the consensus procedure is shown in Figure 7 and Figure 8 with initial states
$$
\begin{aligned}
	&x_{10}=13, y_{10}=13.5, x_{20}=12.8, y_{20}=19, x_{30}=4.2, y_{30}=14,\\
	& x_{40}=4.8, y_{40}=2, x_{50}=12, y_{50}=9, x_{60}=9.1, y_{60}=13 .
\end{aligned}
$$
\begin{figure}
	\centerline{\includegraphics[width=0.55\linewidth]{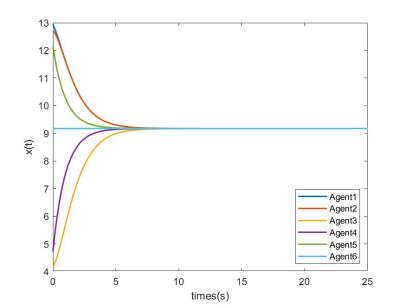}}
	\caption{The first components of the states}
	\label{fig}
\end{figure}
\begin{figure}
	\centerline{\includegraphics[width=0.55\linewidth]{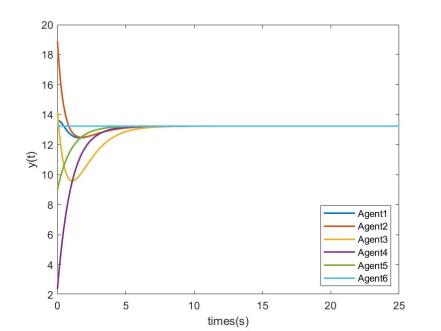}}
	\caption{The second components of the states}
	\label{fig}
\end{figure}
It can be seen from the simulations above that other agents achieve the tracking of the root node actually when the root node does not receive neighbor information.

\section{Conclusion}
The consensus problem of MASs based on the DST-based linear transformation method is studied. First, by constructing a DST-based linear transformation, the consensus problem of the multi-agent system is transformed into the decentralized output stabilization problem, and a sufficient and necessary condition for the multi-agent achieving state consensus was obtained by using the concept of DFM. Based on this, a fully decentralized design procedure of the distributed protocols was proposed by using only the neighbor information of each agent on the DST. We point out that the proposed method is generic and it can be applied to more complex consensus problems in future research.

\end{document}